\DeclareMathOperator{\Br}{Br}
\DeclareMathOperator{\Cor}{cor}
\DeclareMathOperator{\Dec}{Dec}
\DeclareMathOperator{\Gal}{Gal}
\DeclareMathOperator{\Hom}{Hom}
\DeclareMathOperator{\Img}{Im}
\DeclareMathOperator{\Ker}{Ker}
\DeclareMathOperator{\res}{res}
\begin{document}

\newtheorem{thm}{Theorem}[section]
\newtheorem{cor}[thm]{Corollary}
\newtheorem{lem}[thm]{Lemma}
\newtheorem{prop}[thm]{Proposition}
\newtheorem{defin}[thm]{Definition}
\newtheorem{exam}[thm]{Example}
\newtheorem{examples}[thm]{Examples}
\newtheorem{rem}[thm]{Remark}
\newtheorem*{thmA}{Theorem A}
\newtheorem*{thmA'}{Theorem A'}
\newtheorem*{main thm}{Main Theorem}
\swapnumbers
\newtheorem{rems}[thm]{Remarks}
\newtheorem*{acknowledgment}{Acknowledgment}
\numberwithin{equation}{section}

\newcommand{\dec}{\mathrm{dec}}
\newcommand{\dirlim}{\varinjlim}
\newcommand{\discup}{\mathbin{\mathaccent\cdot\cup}}
\newcommand{\nek}{,\ldots,}
\newcommand{\inv}{^{-1}}
\newcommand{\Inv}{\mathrm{inv}}
\newcommand{\isom}{\cong}
\newcommand{\Massey}{\mathrm{Massey}}
\newcommand{\ndiv}{\hbox{$\,\not|\,$}}
\newcommand{\pr}{\mathrm{pr}}
\newcommand{\tensor}{\otimes}
\newcommand{\U}{\mathrm{U}}
\newcommand{\alp}{\alpha}
\newcommand{\gam}{\gamma}
\newcommand{\del}{\delta}
\newcommand{\eps}{\epsilon}
\newcommand{\lam}{\lambda}
\newcommand{\Lam}{\Lambda}
\newcommand{\sig}{\sigma}
\newcommand{\dbF}{\mathbb{F}}
\newcommand{\dbQ}{\mathbb{Q}}
\newcommand{\dbR}{\mathbb{R}}
\newcommand{\dbU}{\mathbb{U}}
\newcommand{\dbZ}{\mathbb{Z}}

\title{Vanishing of Massey products and Brauer groups}

\author{Ido Efrat and Eliyahu Matzri}

\address{Department of Mathematics \\
         Ben-Gurion University of the Negev\\
         Be'er-Sheva 84105 \\
         Israel}
\email{efrat@math.bgu.ac.il, elimatzri@gmail.com}
\thanks{$^1$The authors were supported by the Israel Science Foundation (grant No.\ 152/13).
The second author was also partially supported by the Kreitman foundation.}

\keywords{Galois cohomology, Brauer groups, triple Massey products,  global fields}

\subjclass[2010]{Primary 16K50,  11R34, 12G05, 12E30}

\begin{abstract}
Let $p$ be a prime number and $F$ a field containing a root of unity of order $p$.
We relate recent results on vanishing of triple Massey products in the mod-$p$ Galois cohomology of $F$,
due to Hopkins, Wickelgren, Min\'a\v c, and T\^an, to classical results in the theory of central simple algebras.
For global fields, we prove a stronger form of the vanishing property.
\end{abstract}

\maketitle

\section{Introduction}
We fix a prime number $p$.
Let $F$ be a field, which will always be assumed to contain a root of unity of order $p$.
Let $G_F$ be the absolute Galois group of $F$.
Recent works by Hopkins, Wickelgren, Min\'a\v c, and T\^an revealed a remarkable new property of the mod-$p$ Galois cohomology groups $H^i(G_F,\dbZ/p)$, $i=1,2$, related to triple Massey products.
This property, which they proved in several important cases, puts new restrictions on the possible group-theoretic structure of  maximal pro-$p$ Galois groups of fields, and in particular, of absolute Galois groups.
In particular, in \cite{MinacTan13} Min\'a\v c and T\^an use this method to give new examples of pro-$2$ groups which cannot occur as absolute Galois groups of fields.

More specifically, for an arbitrary profinite group $G$ which acts trivially on $\dbZ/p$,
let $H^i(G)=H^i(G,\dbZ/p)$.
We recall that the triple Massey product is a multi-valued map $\langle\cdot,\cdot,\cdot\rangle\colon H^1(G)^3\to H^2(G)$  (see \S\ref{section on Massey products} for its precise definition).
We consider the following cohomological condition:

{\it If $\chi_1,\chi_2,\chi_3\in H^1(G)$ and $\langle\chi_1,\chi_2,\chi_3\rangle\subseteq H^2(G)$ is nonempty, then it contains zero.}

Following Min\'a\v c and T\^an, we call this condition the {\sl vanishing triple Massey product property} for $G$.

When $G=G_F$ for a field $F$ as above,  this property can be rephrased in a more basic Galois-theoretic language,
in terms of the groups $\dbU_n(\dbF_p)$ of unipotent upper-triangular $n\times n$ matrices over $\dbF_p$.
Namely, $\chi_1,\chi_2,\chi_3$ are the Kummer characters corresponding the elements $a_1,a_2,a_3$ of $F^\times$, which we assume for simplicity to be $\dbF_p$-linearly independent in $F^\times/(F^\times)^p$.
Then (see Corollary \ref{Massey products and Un}):
\begin{itemize}
\item
 $\langle\chi_1,\chi_2,\chi_3\rangle\neq\emptyset$ if and only if each of $F(a_1^{1/p},a_2^{1/p})$ and $F(a_2^{1/p},a_3^{1/p})$ embeds inside a Galois extension of $F$ with Galois group $\dbU_3(\dbF_p)$;
\item
$0\in\langle\chi_1,\chi_2,\chi_3\rangle$ if and only if $F(a_1^{1/p},a_2^{1/p},a_3^{1/p})$ embeds inside a Galois extension of $F$ with Galois group $\dbU_4(\dbF_p)$.
\end{itemize}

The vanishing triple Massey product condition was shown to hold for $G=G_F$ in the following situations:
\begin{enumerate}
\item[(i)]
$p=2$ and $F$ is a global field \cite{HopkinsWickelgren15};
\item[(ii)]
$p=2$ and $F$ is arbitrary \cite{MinacTan13};
\item[(iii)]
$p$ is arbitrary and $F$ is a global field \cite{MinacTan14}.
\end{enumerate}
Moreover, Min\'a\v c and T\^an conjecture that $G_F$ satisfies the vanishing triple Massey product property for every field $F$
containing a root of unity of order $p$ \cite{MinacTan15}.
If true, this would give new kinds of examples of pro-$p$ groups which are not realizable as absolute Galois groups for arbitrary primes $p$,
along the lines of \cite{MinacTan13} (where this is done for $p=2$).
In view of the interpretations of triple Massey products in terms of $\dbU_3(\dbF_p)$- and $\dbU_4(\dbF_p)$-Galois extension,
one also obtains an ``automatic realization" principle in the above cases, and conjecturally always.

In this note we relate these recent developments to classical results in the theory of central simple algebras and Brauer groups.
We investigate another cohomological property of $G$, which we call the \textsl{cup product--restriction property} for characters $\chi_1\nek\chi_r\in H^1(G)$.
This property for $r=2$ implies the vanishing triple Massey product property, but unlike the latter property, it does not involve external cohomological operations.
Now when $G=G_F$, we prove this cup product--restriction property for global fields and arbitrary $r$, using the Albert--Brauer--Hasse--Noether theorem and an injectivity theorem for $H^1$ due to Artin and Tate.
In the case where $p=2$ and $r=2$ the cup-product--restriction property for $G_F$ was proved by Tignol \cite{Tignol81a}*{Cor.\ 2.8}, and is an easy consequence of  a refinement, also due to Tignol  \cite{Tignol79}*{Th.\ 1},  of a result of Albert on the decomposition of central simple algebras as a tensor product of two quaternion algebras.

Specifically, let $G$ be an arbitrary profinite group, let $\chi_1\nek\chi_r\in H^1(G)=\Hom(G,\dbZ/p)$, and set $K=\bigcap_{i=1}^r\Ker(\chi_i)$.
We define a multi-linear map $\Lam_{(\chi_i)}\colon H^1(G)^r\to H^2(G)$ by $\Lam_{(\chi_i)}(\varphi_1\nek\varphi_r)=\sum_{i=1}^r\chi_i\cup\varphi_i$.
It lifts to a homomorphism $\Lam_{(\chi_i)}\colon H^1(G)^{\tensor r}\to H^2(G)$.
We say that the \textsl{cup product--restriction property} holds for $\chi_1\nek\chi_r$ if the sequence
\begin{equation}
\label{cup product-restriction}
H^1(G)^{\tensor r}\xrightarrow{\Lam_{(\chi_i)}}H^2(G)\xrightarrow{\res_K}H^2(K)
\end{equation}
is exact.
Note that (\ref{cup product-restriction}) is always a complex.
Further,  its exactness depends only on $K$ (but not on the choice of $\chi_1\nek\chi_r$; see Proposition \ref{dependence on K}).

Now when $\chi_1,\chi_2,\chi_3\in H^1(G)$, the cup product--restriction property for $\chi_1,\chi_3\in H^1(G)$ implies the vanishing triple Massey product
property for  $\langle\chi_1,\chi_2,\chi_3\rangle$ (Proposition \ref{cohomological lemma}).

\begin{main thm}
\label{thm on relative Brauer group}
Let $a_1\nek a_r\in F^\times$.
Suppose that one of the following conditions holds:
\begin{enumerate}
\item[(1)]
$F$ is a global field;
\item[(2)]
$p=2$ and $r=2$.
\end{enumerate}
Then the cup product--restriction property holds for the Kummer elements $(a_1)\nek (a_r)$ in $H^1(F)$.
\end{main thm}

In particular, by restricting to $r=2$, this gives alternative algebra-theoretic proofs of the  results of Hopkins, Wickelgren, Min\'a\v c, and T\^an on the vanishing of triple Massey products, and relates these works to the above-mentioned classical results on Brauer groups.
As remarked above, the case (2) was earlier proved in \cite{Tignol81a}*{Cor.\ 2.8}, and is brought here in order to relate the respective results on Massey products to their algebra-theoretic counterparts (see \S5).
The cup-product--restriction property for $G=G_F$ is also closely related to the subgroup $\Dec(L/F)$ of ${}_p\Br(F)$, introduced in \cite{Tignol81a}, \cite{Tignol81b} where $L$ is the fixed field of $K$ in the separable closure of $F$.

In the case $p=2$ the cup-product--restriction property for absolute Galois groups of fields is essentially the property $P_2$ studied in \cite{Tignol81a} and \cite{ElmanLamTignolWadsworth83}.
In particular, in this situation, case (1) of the Main Theorem was earlier proved in \cite{ElmanLamTignolWadsworth83}*{Cor.\ 3.18}.

On the other hand, constructions of Tignol \cite{Tignol87} and McKinnie \cite{McKinnie11} show that, for $p$ odd, there exist fields $F$ for which the cup product--restriction property with $r=2$ does not hold (Example \ref{McKinnies example}).
This shows that some assumptions on $F$, as in the Main Theorem, are needed.

We thank the referees of this paper for very valuable comments and in particular for drawing our attention to several important references, which we included in the reference list.

\section{Preliminaries in Galois cohomology}
Let $F$ be again a field containing a root of unity of order $p$.
We abbreviate $H^i(F)=H^i(G_F,\dbZ/p)$.
We fix an isomorphism between the group $\mu_p$ of the $p$-th roots of unity and $\dbZ/p$.
This isomorphism induces the Kummer isomorphism $H^1(F)\isom F^\times/(F^\times)^p$.
Given $a\in F^\times$ let $(a)\in H^1(F)$ be the corresponding Kummer element.

Let $\Br(F)$ be the Brauer group of $F$ and let ${}_p\Br(F)$ be its subgroup consisting of all elements with exponent dividing $p$.
Given a field extension $L/F$ we write $\Br(L/F)$ for the kernel of the restriction map $\Br(F)\to\Br(L)$.
The isomorphism $\mu_p\isom\dbZ/p$ also induces in a standard way an isomorphism  $H^2(F)\isom{}_p\Br(F)$.
For $a,b\in F^\times$, let $(a,b)_F$ be the corresponding symbol $F$-algebra of degree $p$.
The cup product $(a)\cup(b)$ in $H^2(F)$ then corresponds to the similarity class  $[(a,b)_F]$ in ${}_p\Br(F)$.

\section{The cup product--restriction property}
\label{comments on cup res property}

We first show that the cup product--restriction property depends only on the subgroup $K=\bigcap_{i=1}^r\Ker(\chi_i)$, but not on the choice of $\chi_1\nek\chi_r$.

\begin{prop}
\label{dependence on K}
Let $G$ be a profinite group and let $K$ be an open subgroup of $G$.
Consider $\chi_1\nek \chi_r,\mu_1\nek\mu_s\in H^1(G)$ such that  $K=\bigcap_{i=1}^r\Ker(\chi_i)=\bigcap_{j=1}^s\Ker(\mu_j)$.
Then the cup product--restriction property holds for $\chi_1\nek \chi_r$ if and only if it holds for $\mu_1\nek\mu_s$.
\end{prop}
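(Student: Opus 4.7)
The plan is to reduce the question to showing that $\Img(\Lam_{(\chi_i)}) \subseteq H^2(G)$ depends only on $K$, since $\Ker(\res_K)$ manifestly does. Expanding the definition,
\[
\Img\bigl(\Lam_{(\chi_i)}\bigr) = \sum_{i=1}^r \chi_i\cup H^1(G),
\]
and by $\dbF_p$-bilinearity of the cup product this equals $\sum_{\chi\in V}\chi\cup H^1(G)$, where $V$ is the $\dbF_p$-span of $\chi_1,\ldots,\chi_r$ in $H^1(G)$. Hence the whole question reduces to showing that $V$ depends only on $K$.

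For this, let $W_K \subseteq H^1(G)$ be the subspace of characters vanishing on $K$; equivalently, via inflation, $W_K = H^1(G/K)$. Then $W_K$ is visibly intrinsic to $K$, and the inclusion $V \subseteq W_K$ holds by hypothesis. I would then prove the reverse inclusion $V = W_K$. Since $K$ is open and normal in $G$ (being an intersection of kernels of continuous homomorphisms to $\dbZ/p$), the quotient $G/K$ is a finite elementary abelian $p$-group, and $W_K = \Hom(G/K,\dbZ/p)$ is its $\dbF_p$-linear dual. The hypothesis $\bigcap_{i=1}^r\Ker(\chi_i)=K$ says exactly that $\chi_1,\ldots,\chi_r$, viewed in $\Hom(G/K,\dbZ/p)$, have trivial common kernel on $G/K$; by $\dbF_p$-duality for finite-dimensional vector spaces this forces them to span the full dual, so $V=W_K$. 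The same reasoning applied to $\mu_1,\ldots,\mu_s$ yields the same $W_K$, whence $\Img(\Lam_{(\chi_i)}) = \Img(\Lam_{(\mu_j)})$, and the exactness of \eqref{cup product-restriction} is a property of $K$ alone.

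The only step that requires genuine care is the duality assertion $V=W_K$; everything else is a formal unwinding of definitions. Because $K$ is open in $G$, however, the quotient $G/K$ is a finite-dimensional $\dbF_p$-vector space, so this step reduces to the standard linear-algebra fact that a family of functionals on such a space with trivial common kernel must span the full dual. I therefore do not expect any real obstacle beyond recording this observation cleanly.
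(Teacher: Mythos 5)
Your proposal is correct and follows essentially the same route as the paper: both arguments reduce exactness to the equality of $\Img(\Lam_{(\chi_i)})$ and $\Img(\Lam_{(\mu_j)})$, and establish this by showing that the $\dbF_p$-span of the characters is determined by $K$ alone via duality with the finite elementary abelian quotient $G/K$. The only cosmetic difference is that the paper invokes a perfect pairing $G/G^p[G,G]\times H^1(G)\to\dbZ/p$ from the literature, while you carry out the (equivalent) finite-dimensional linear-algebra duality for $G/K$ directly.
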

\begin{proof}
There is a perfect pairing
\[
G/G^p[G,G]\times H^1(G)\to\dbZ/p, \quad (\bar g,\varphi)\mapsto \varphi(g)
\]
\cite{EfratMinac11}*{Cor.\ 2.2}.
It induces a perfect pairing $G/K\times\langle\chi_1\nek\chi_r\rangle\to\dbZ/p$, and similarly for the $\mu_j$.
Therefore $\langle\chi_1\nek \chi_r\rangle=\langle\mu_1\nek\mu_s\rangle$.
It follows that
\[
\chi_1\cup H^1(G)+\cdots+\chi_r\cup H^1(G)=\mu_1\cup H^1(G)+\cdots+\mu_s\cup H^1(G),
\]
i.e.,  the homomorphisms
\[
\Lam_{(\chi_i)}\colon H^1(G)^{\tensor r}\to H^2(G), \quad \Lam_{(\mu_j)}\colon H^1(G)^{\tensor s}\to H^2(G)
\]
have the same image, and the assertion follows.
\end{proof}

Consequently, for every open subgroup $K$ of $G$ such that $G/K$ is an elementary abelian $p$-group,
we may define the \textsl{cup product--restriction property for $K$} to be the cup product--restriction property
 for $\chi_1\nek\chi_r$, where $\chi_1\nek\chi_r$ is any list of elements in $H^1(G)$ such that $K=\bigcap_{i=1}^r\Ker(\chi_i)$.

\begin{exam}
\label{example r=1}
\rm
Let $\chi=\chi_1\in H^1(G)$, so $K=\Ker(\chi)$.
The cup product--restriction property for $\chi$ means that the sequence
\begin{equation}
\label{exact sequence for r=1}
H^1(G)\xrightarrow{\chi\cup} H^2(G)\xrightarrow{\res} H^2(K),
\end{equation}
is exact.
This is trivial when $\chi=0$, so we assume that $\chi\neq0$ and therefore $(G:K)=p$.

When $p=2$ (\ref{exact sequence for r=1}) is always exact, and is in fact a segment of the infinite Arason exact sequence \cite{Arason75}*{Satz 4.5}:
\[
\cdots\xrightarrow{\res}H^i(K)\xrightarrow{\Cor}H^i(G)\xrightarrow{\chi\cup}H^{i+1}(G)
\xrightarrow{\res}H^{i+1}(K)\xrightarrow{\Cor}\cdots\ .
\]

When $p$ is an arbitrary prime and $F$ is a field (containing as always a fixed root of unity of order $p$), we may write  $\chi=(a)$ for some $a\in F^\times$.
Then $L=F(a^{1/p})$ is the $\dbZ/p$-Galois extension corresponding to $K$.
There is an isomorphism
\[
F^\times/N_{L/F}L^\times\xrightarrow{\sim}\Br(L/F), \quad xN_{L/F}L^\times\mapsto [(a,x)_F]
\]
\cite{Draxl83}*{p.\ 73, Th.\ 1}.
It follows that (\ref{exact sequence for r=1}) is exact, i.e., the cup product--restriction property for $\chi$ holds.
This is again a part of a more general fact:
Based on results of Voevodsky  \cite{Voevodsky03}*{\S5}, it was shown by Lemire, Min\'a\v c and Swallow \cite{LemireMinacSwallow07}*{Th.\ 6} that for every $i\geq1$ the following sequence is exact:
\[
H^i(L)\xrightarrow{\Cor}H^i(F)
\xrightarrow{\chi\cup}H^{i+1}(F)\xrightarrow{\res} H^{i+1}(L),
\]
\end{exam}

\begin{rem}
\label{Dec}
\rm
Suppose that  $F$ is a field containing a root of unity of order $p$, and $L$ is a Galois extension of $F$ with $\Gal(L/F)$ an elementary abelian $p$-group.
As in \cite{Tignol81a}, \cite{Tignol81b} let $\Dec(L/F)$ be the subgroup of $\Br(L/F)$ generated by all subgroups $\Br(L'/F)$, where $L'$ ranges over all cyclic $p$-extensions of $F$ contained in $L$.
Then the cup product--restriction property holds for the subgroup $G_L$ of $G_F$ if and only if $\Dec(L/F)={}_p\Br(L/F)$.
\end{rem}

\begin{exam}
\label{McKinnies example}
\rm
For $p$ odd there are constructions due to Tignol \cite{Tignol87}*{Th.\ 1, Rem.\ 1.3(a)} and McKinnie \cite{McKinnie11} (see also \cite{Saltman79} and \cite{Rowen82} for related works), of division algebras $D$ over a field $F$ which contains a root of unity of order $p$, such that
\begin{enumerate}
\item[(a)]
$D$ splits in $L=F(a_1^{1/p},a_2^{1/p})$ for certain $a_1,a_2\in F^\times$;
\item[(b)]
$[D]\not\in\Dec(L/F)$.
\end{enumerate}
In view of Remark \ref{Dec}, this means that the cup product--restriction property does not hold for the subgroup $K=G_L$ of $G=G_F$.
\end{exam}

\section{Global fields}
Throughout this section we assume that $F$ is a global field containing a root of unity of order $p$.
For every place $v$ on $F$ we write $F_v$ for the completion of $F$ relative to $v$, and denote the canonical extension of $v$ to $F_v$ also by $v$.
There is a canonical monomorphism $\Inv_v\colon \Br(F_v)\to\dbQ/\dbZ$ which is an isomorphism for $v$ non-archimedean.
Restricting to ${}_p\Br(F)$, we obtain a monomorphism $\Inv_v\colon H^2(F_v)\to\frac1p\dbZ/\dbZ$.
It is an isomorphism unless $v$ is archimedean and $p\neq2$ (and in the latter case $H^2(F_v)=0$).
Given a finite extension $E$ of $F_v$, there is a commutative square
\[
\xymatrix{
\Br(E)\ar[r]^{\Inv_u} &\dbQ/\dbZ \\
\Br(F_v)\ar[r]^{\Inv_v}\ar[u]^{\res_E} &\dbQ/\dbZ\ar[u]_{[E:F_v]},
}
\]
where the map on the right means multiplication by the degree $[E:F_v]$ \cite{SerreLocalFields}*{Ch.\ XIII, \S3, Prop.\ 7}.
Consequently, if $p|[E:F_v]$, then $\res_E\colon H^2(F_v)\to H^2(E)$ is the zero map.

We recall that, by classical results of Albert, Brauer, Hasse and Noether, the following sequence is exact:
\[
0\to\Br(F)\xrightarrow{\res}\bigoplus_v\Br(F_v)\xrightarrow{\sum_v\Inv_v}\dbQ/\dbZ\to 0.
\]
It gives rise to an exact sequence
\begin{equation}
\label{LGP}
0\to H^2(F)\xrightarrow{\res}\bigoplus_v H^2(F_v)\xrightarrow{\sum_v\Inv_v}\tfrac1p\dbZ/\dbZ.
\end{equation}

\begin{lem}
\label{v0}
Let  $S$ be a finite set of places on $F$ and let $a_1\nek a_r\in F^\times$.
Suppose that $a_1\not\in (F^\times)^p$.
Then there exists a place $v_0$ on $F$ such that $v_0\not\in S$ and
$a_1,a'_2\nek a'_r\not\in (F_{v_0}^\times)^p$, where for each $2\leq i\leq r$ either $a'_i=a_i$ or $a'_i=a_1a_i$.
\end{lem}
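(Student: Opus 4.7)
The plan is to first reduce the lemma to finding a single place $v_0\not\in S$ at which $a_1$ fails to be a $p$-th power, and then to exhibit such a place via Chebotarev density.

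For the reduction, suppose that $v_0\not\in S$ satisfies $a_1\not\in(F_{v_0}^\times)^p$. For each $i\in\{2\nek r\}$, I would set $a'_i=a_i$ if $a_i\not\in(F_{v_0}^\times)^p$, and $a'_i=a_1a_i$ otherwise. In the second case $a_1a_i$ cannot lie in $(F_{v_0}^\times)^p$, for otherwise its quotient by $a_i\in(F_{v_0}^\times)^p$, namely $a_1$, would itself be a $p$-th power in $F_{v_0}^\times$, contradicting the choice of $v_0$. This covers all $i\geq 2$ uniformly, and leaves only the construction of $v_0$.

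To produce $v_0$, I would use that since $a_1\not\in(F^\times)^p$ and $F$ contains a root of unity of order $p$, the extension $L=F(a_1^{1/p})/F$ is a nontrivial cyclic Galois extension of degree $p$. By the Chebotarev density theorem, the set of places of $F$ which do not split completely in $L/F$ has positive density $(p-1)/p$, hence is infinite, and removing the finite set $S$ still leaves a place $v_0$. At such $v_0$ the local field $F_{v_0}$ does not contain $L$: indeed, since $\mu_p\subseteq F\subseteq F_{v_0}$, any root of $X^p-a_1$ in $F_{v_0}$ would drag along all $p$ conjugate roots and force $v_0$ to split completely, a contradiction. Consequently $a_1\not\in(F_{v_0}^\times)^p$, as required.

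The only non-elementary ingredient is Chebotarev density, invoked to guarantee a non-split place outside the finite set $S$; everything else is formal. I therefore do not expect a serious obstacle, and the main stylistic decision is merely which packaging of Chebotarev (density, or the weaker statement that the restriction $F^\times/(F^\times)^p\to\prod_{v\not\in S}F_v^\times/(F_v^\times)^p$ is injective) to invoke.
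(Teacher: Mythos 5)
Your proof is correct and follows essentially the same route as the paper: the adjustment $a'_i\in\{a_i,a_1a_i\}$ is handled identically, and the only difference is in packaging the key input --- the paper cites the Artin--Tate injectivity of $H^1(F)\to\prod_{v\not\in S}H^1(F_v)$ to produce $v_0$, whereas you apply Chebotarev to the degree-$p$ Kummer extension $F(a_1^{1/p})/F$ directly; as you anticipated, these are interchangeable forms of the same fact.
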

\begin{proof}
The restriction map $H^1(F)\to\prod_{v\not\in S}H^1(F_v)$ is injective \cite{ArtinTate}*{Ch.\ IX, Th.\ 1}.
Hence there is a place $v_0\not\in S$ with $(a_1)\neq0\in H^1(F_{v_0})$.
Then for every $i$ with $2\leq i\leq r$ we have $(a_i)\neq0\in H^1(F_{v_0})$ or $(a_1a_i)=(a_1)+(a_i)\neq0\in H^1(F_{v_0})$, and we can choose $a'_i$ accordingly.
\end{proof}

\medskip

\begin{proof}[Proof of Case (1) of the Main Theorem]
Let $a_1\nek a_r\in F^\times$.
If $a_1\nek a_r\in (F^\times)^p$,  then the cup product--restriction property for $(a_1)\nek(a_r)\in H^1(F)$ is trivial.
We may therefore assume that $a_1\not\in (F^\times)^p$.
Let $L=F(a_1^{1/p}\nek a_r^{1/p})$,
and consider $\alp\in H^2(F)$ with $\res_L(\alp)=0$ in $H^2(L)$.
We have to show that $\alp\in (a_1)\cup H^1(F)+\cdots+ (a_r)\cup H^1(F)$.

To this end let $S$ be the set of all places $v$ on $F$ such that $\alp_{F_v}\neq0$, where $\alp_{F_v}$ denotes the restriction of $\alp$ to $H^2(F_v)$.
By (\ref{LGP}), $S$ is finite.
Let $v_0$ and $a'_2\nek a'_r$ be as in Lemma \ref{v0}.
In particular,  $\alp_{F_{v_0}}=0$.
Now $L=F(a_1^{1/p},(a'_2)^{1/p}\nek (a'_r)^{1/p})$ and
\[
\begin{split}
&(a_1)\cup H^1(F)+(a_2)\cup H^1(F)+\cdots+(a_r)\cup H^1(F)\\
=&(a_1)\cup H^1(F)+(a'_2)\cup H^1(F)+\cdots+(a'_r)\cup H^1(F).
\end{split}
\]
We may therefore replace $a_i$ by $a'_i$, $i=2\nek r$, to assume without loss of generality that $a_1\nek a_r\not\in (F_{v_0}^\times)^p$.

Next for $1\leq i\leq r$ let
\[
S_i=\{v\ |\  a_i^{1/p}\not\in F_v\}.
\]
Thus $v_0\in (S_1\cap\cdots\cap S_r)\setminus S$.

If $v\in S$, then $L\not\subseteq  F_v$, so $a_i^{1/p}\not\in F_v$ for some $1\leq i\leq r$.
This shows that $S\subseteq S_1\cup\cdots\cup S_r$.
Hence we may decompose $S=S'_1\discup\cdots\discup S'_r$ with $S'_i\subseteq S_i$, $i=1,2\nek r$.
Note that $v_0\not\in S'_i$ for every $i$.

For every $i$ let
\[
t_i:=\sum_{v\in S'_i} \Inv_v(\alp_{F_v}).
\]
Then (\ref{LGP})  gives rise to $\alp_i\in H^2(F)$ with local invariants
\[
\Inv_v((\alp_i)_{F_v})
=\begin{cases}
\Inv_v(\alp_{F_v}), & \hbox{if } v\in S'_i,\\
-t_i,& \hbox{if } v=v_0,\\
0,& \hbox{otherwise}.\\
\end{cases}
\]

{\bf Claim 1:} \ For every place $w$ on $F$,
\begin{equation}
\label{equality of local invariants}
\Inv_w(\alp_{F_w})=\sum_{i=1}^r\Inv_w((\alp_i)_{F_w}).
\end{equation}
Indeed, when $w\in S'_i$ for some $i$, this follows from the disjointness of $S'_1\nek S'_r$.
When $w=v_0$ we compute using (\ref{LGP}):
\[
\begin{split}
\Inv_w(\alp_{F_w})=0&=-\sum_v\Inv_v(\alp_{F_v})=-\sum_{v\in S}\Inv_v(\alp_{F_v})\\
&=-\sum_{i=1}^r\sum_{v\in S'_i} \Inv_v(\alp_{F_v})=-\sum_{i=1}^rt_i=\sum_{i=1}^r\Inv_w((\alp_i)_{F_w}).
\end{split}
\]
For all other places, the left-hand side of (\ref{equality of local invariants}) is zero, by the definition of $S$, and all summands on the right-hand side are zero by the choice of $\alp_i$.
This proves the claim.

We conclude from Claim 1 and from (\ref{LGP})  that $\alp=\sum_{i=1}^r\alp_i$ in $H^2(F)$.

\medskip

{\bf Claim 2:} \
For every $1\leq i\leq r$ and every place $u$ on $F(a_i^{1/p})$ one has
\[
(\alp_i)_{F(a_i^{1/p})_u}=0.
\]
To see this let $v$ be the place on $F$ which lies under $u$.

\medskip

Case 1: $v\in S_i$.
Then $p=[F_v(a_i^{1/p}):F_v]\bigm|[F(a_i^{1/p})_u:F_v]$, so as we have seen, $\res\colon H^2(F_v)\to H^2(F(a_i^{1/p})_u)$ is the zero map.
In particular, $(\alp_i)_{F(a_i^{1/p})_u}=0$.

Case 2: $v\not\in S_i$.
Then $a_i^{1/p}\in F_v$ and $v\neq v_0$.
Hence $F(a_i^{1/p})_u=F_v$.
The choice of $\alp_i$ implies that $\Inv_v((\alp_i)_{F_v})=0$, so again,
$(\alp_i)_{F(a_i^{1/p})_u}=(\alp_i)_{F_v}=0$.

\medskip

We conclude from Claim 2 and from (\ref{LGP}) that $(\alp_i)_{F(a_i^{1/p})}=0$, $i=1,2\nek r$.
Therefore $\alp_i\in (a_i)\cup H^1(F)$ (Example \ref{example r=1}).
It follows that
\[
\alp=\sum_{i=1}^r\alp_i\in (a_1)\cup H^1(F)+\cdots+(a_r)\cup H^1(F).
\qedhere
\]
\end{proof}

\section{The case $p=2$}
Let $p=2$ and let $F$ be a field of characteristic $\neq2$.
By a classical result of Albert \cite{Albert39}, every central simple $F$-algebra of degree $4$ and exponent $2$ is $F$-isomorphic to a tensor product of two $F$-quaternion algebras.
This was extended by Tignol \cite{Tignol79}*{Th.\ 1} as follows.
Recall that an involution on a central simple $F$-algebra is of the \textsl{first kind} if it is the identity on $F$.

\begin{thm}[Tignol]
\label{stronger Albert}
Let $A$ be a central simple $F$-algebra with involution of the first kind and which is split by
a Galois extension $M$ of $F$ with Galois group $(\dbZ/2\dbZ)^2$.
Let $L_1,L_2$ be quadratic extensions of $F$ with $M=L_1L_2$.
Then there are quaternion $F$-algebras $Q_1,Q_2$ such that $L_i\subset Q_i\subset A$, $i=1,2$, and $A\isom_F Q_1\tensor_F Q_2$.
\end{thm}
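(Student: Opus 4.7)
The plan is to build a quaternion $F$-subalgebra $Q_1 \subset A$ containing $L_1$ and then take $Q_2 = C_A(Q_1)$; the main work lies in constructing $Q_1$. First reduce to the case $\deg A = 4$: since $A$ admits an involution of the first kind, $[A]$ is $2$-torsion in $\Br(F)$, and since $A$ is split by $M$ of degree $4$, its index divides $4$, so the nontrivial case is $\deg A = 4$. In that case $M$ embeds in $A$ as a maximal commutative subalgebra. Write $L_i = F(u_i)$ with $u_i^2 = a_i \in F^\times$ and view $u_1, u_2$ as commuting elements of $A$.

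Next, produce a quaternion generator anti-commuting with $u_1$. Let $\tau_1 \in \Gal(M/F)$ be the nontrivial element fixing $L_2$ pointwise. By Skolem--Noether pick $v_1 \in A^\times$ with $v_1 m v_1\inv = \tau_1(m)$ for all $m \in M$; equivalently $v_1 u_1 = -u_1 v_1$ and $v_1 u_2 = u_2 v_1$. A direct computation shows that $v_1^2$ centralizes $M$ and hence lies in $C_A(M) = M$, while it also commutes with $v_1$ itself, so it lies in the $\tau_1$-fixed subfield of $M$, namely $L_2$. Thus $v_1^2 \in L_2^\times$.

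The key step, which I expect to be the main obstacle, is to replace $v_1$ by $m v_1$ for a suitable $m \in M^\times$ so that the new square lies in $F^\times$. A quick calculation gives $(m v_1)^2 = m \tau_1(m) v_1^2 = N_{M/L_2}(m) \cdot v_1^2$, so the task is to show that the class of $v_1^2$ in the quotient $L_2^\times / (F^\times \cdot N_{M/L_2}(M^\times))$ is trivial. Here is where the involution $\sigma$ of the first kind enters essentially: after modifying $\sigma$ by an inner automorphism so that it has a normalized form relative to $u_1$ (e.g.\ $\sigma(u_1) = \pm u_1$), the element $\sigma(v_1) v_1\inv$ lies in $M^\times$ and satisfies a cocycle-type identity that, when pushed down to $L_2$ via $N_{M/L_2}$, exhibits $v_1^2$ modulo $F^\times$ as a norm from $M$. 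In the language of \S\ref{comments on cup res property}, this is a concrete incarnation of the cup product--restriction property for the subgroup $G_M \subset G_F$ when $p = 2$ and $r = 2$.

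Once $v_1^2 = b_1 \in F^\times$, set $Q_1 = F\langle u_1, v_1\rangle$, a quaternion $F$-subalgebra of $A$ isomorphic to $(a_1, b_1)_F$ and containing $L_1$. Let $Q_2 = C_A(Q_1)$. By the double centralizer theorem $Q_2$ is central simple over $F$ of degree $\deg(A)/\deg(Q_1) = 2$, hence a quaternion $F$-algebra, and $A \isom Q_1 \tensor_F Q_2$. Since $u_2$ commutes with $u_1$ (inside $M$) and with $v_1$ (by our choice of $v_1$), one has $u_2 \in C_A(Q_1) = Q_2$, so $L_2 \subset Q_2$, completing the proof.
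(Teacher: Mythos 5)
A preliminary remark on the comparison you asked for: the paper offers no proof of Theorem \ref{stronger Albert} at all --- it is quoted from Tignol \cite{Tignol79}*{Th.\ 1} as a known refinement of Albert's decomposition theorem --- so there is no in-paper argument to measure yours against; your proposal must stand on its own. Its skeleton is sound: embedding $M$ as a maximal subfield of the (degree-$4$) algebra $A$, producing $v_1$ by Skolem--Noether with $v_1u_1=-u_1v_1$, $v_1u_2=u_2v_1$, $v_1^2\in L_2^\times$, computing $(mv_1)^2=N_{M/L_2}(m)\,v_1^2$, and closing with the double centralizer theorem are all correct, standard steps. But the step you yourself flag as ``the main obstacle,'' namely $v_1^2\in F^\times\cdot N_{M/L_2}(M^\times)$, \emph{is} the theorem, and the mechanism you sketch for it does not produce it. Normalize $\sigma$ so that $\sigma|_M=\id$ (this is what makes $e:=\sigma(v_1)v_1\inv$ lie in $M^\times$, since then $v_1$ and $\sigma(v_1)$ both induce $\tau_1$ on $M$). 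Writing $\sigma(v_1)=ev_1$ and using $\sigma^2=\id$ gives exactly one ``cocycle-type identity,'' $e\,\tau_1(e)=N_{M/L_2}(e)=1$; the resulting constraint on $v_1^2$ is $\sigma(v_1^2)=e\,\tau_1(e)\,v_1^2=v_1^2$, which is vacuous because $v_1^2$ already lies in $L_2\subseteq M$ and $\sigma|_M=\id$. Moreover $N_{M/L_2}$ restricted to $L_2^\times$ is squaring, so $L_2^{\times 2}\subseteq N_{M/L_2}(M^\times)$ and any argument that only exhibits a power of $v_1^2$ as a norm proves nothing: the quotient $L_2^\times/(F^\times\cdot N_{M/L_2}(M^\times))$ has exponent $2$. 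In short, the involution, used as you describe, places no constraint whatsoever on the class of $v_1^2$ in that quotient.

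The missing content is substantial, not a routine verification. In equivalent form your key step says that the quaternion $L_2$-algebra $C_A(L_2)\isom(a_1,v_1^2)_{L_2}$ descends to $F$ relative to its subfield $M$, and the corestriction/projection-formula computation only yields that $N_{L_2/F}(v_1^2)$ is a norm from $L_1$ --- passing from that to $v_1^2\in F^\times\cdot N_{M/L_2}(M^\times)$ is precisely the point where the analogous statement \emph{fails} for odd $p$ (Example \ref{McKinnies example}), so any correct argument must use the $p=2$ structure (in Tignol's and Albert's treatments, via the type and discriminant of the involution, or the Albert quadratic form) in an essential and non-formal way. Your closing remark that this step ``is a concrete incarnation of the cup product--restriction property for $G_M$ with $p=2$, $r=2$'' makes the difficulty explicit: that property is what the paper \emph{deduces from} Theorem \ref{stronger Albert} in the proof of Case (2) of the Main Theorem, so appealing to it here, even informally, is circular. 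To complete the proof you would need to either carry out Tignol's actual argument or supply an independent proof of the norm statement for $v_1^2$.
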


For a closely related result see \cite{Rowen84}*{Cor.\ 5}.

The case $p=2$, $r=2$ of the Main Theorem is an easy corollary of Theorem \ref{stronger Albert}.
In a different terminology it was obtained by Tignol in \cite{Tignol81a}*{Cor.\ 2.8}, however we provide a short proof showing its relation to Theorem \ref{stronger Albert}.

\begin{proof}[Proof of Case (2) of the Main Theorem]
Let $a_1,a_2\in F^\times$ and denote $M=F(\sqrt a_1,\sqrt a_2)$.
Thus in the terminology of sequence (\ref{cup product-restriction}), $K=G_M$.
Since the sequence (\ref{cup product-restriction}) is always a complex, we need to show that for every central simple $F$-algebra $A$ of exponent $2$ and which splits in $M$
the similarity class $[A]$ of $A$ in ${}_2\Br(F)$ is contained in $(a_1)\cup H^1(G)+(a_2)\cup H^1(G)$.
We may assume that $A$ does not split in $F$.
If $a_1,a_2$ have $\dbF_2$-linearly dependent cosets in $F^\times/(F^\times)^2$, then we are done by Example \ref{example r=1}.
So assume that $a_1,a_2$  have $\dbF_2$-linearly independent cosets in $F^\times/(F^\times)^2$.

Since $A$ splits in $M$, it is similar to a central simple $F$-algebra $A'$ of degree $[M:F]=4$  and which contains $M$ \cite{Draxl83}*{p.\ 64, Th.\ 7}.
The exponent of $A'$ is also $2$, and therefore it has an involution of the first kind \cite{Albert39}*{Ch.\ X, Th.\ 19}.
Let $L_i=F(\sqrt{a_i})$, $i=1,2$.
Theorem \ref{stronger Albert} yields quaternion $F$-subalgebras $Q_1,Q_2$ of $A'$ which contain $L_1,L_2$ respectively, and such that $A'\isom_F Q_1\tensor_F Q_2$.
Then $Q_1\isom_F(a_1,x)_F$ and $Q_2\isom_F(a_2,y)_F$ for some $x,y\in F^\times$  \cite{Draxl83}*{p.\ 104, Th.\ 4}.
Therefore $[A]=[A']=(a_1)\cup (x)+(a_2)\cup(y)$, as desired.
\end{proof}

\begin{rem}
\label{rems on the odd prime case}
\rm
There are no known direct generalizations of Theorem \ref{stronger Albert} and \cite{Rowen84}*{Cor.\ 5} for odd primes.
For instance, when $p=3$, it seems that the best result to date is that a central simple $F$-algebra which contains a maximal subfield $L$
which is Galois over $F$ with $\Gal(L/F)\isom\dbZ/3\times\dbZ/3$ (i.e., $A$ is a $\dbZ/3\times\dbZ/3$-crossed product over $F$) is similar to the tensor product of $\leq31$ symbol algebras of degree $3$ over $F$ \cite{Matzri14}.
\end{rem}

\section{Massey products}
\label{section on Massey products}
We recall the definition and basic properties of Massey products of degree $1$ cohomology elements.
For more information see e.g., Fenn \cite{Fenn83},  Kraines \cite{Kraines66},  Dwyer \cite{Dwyer75} (and in a more general setting, May \cite{May69}).
Note  that the various sources use different sign conventions.

We first recall that a \textsl{differential graded algebra}  over a ring $R$ ($R$-DGA)  is a graded $R$-algebra
$C^\bullet=\bigoplus_{s=0}^\infty C^s$ equipped with $R$-module homomorphisms
$\partial^s\colon C^s\to C^{s+1}$ such that $(C^\bullet,\bigoplus_{s=0}^\infty\partial^s)$ is a complex satisfying the \textsl{Leibnitz rule}
$\partial^{r+s}(ab)=\partial^r(a)b+(-1)^ra\partial^s(b)$ for  $a\in C^r$, $b\in C^s$.
Set $Z^r=\Ker(\partial^r)$, $B^r=\Img(\partial^{r-1})$, and $H^r=Z^r/B^r$, and let $[c]$ denote the class of $c\in Z^r$ in $H^r$.

We fix  an integer $n\geq 2$.
Consider a system $c_{ij}\in C^1$, where $1\leq i\leq j\leq n$ and $(i,j)\neq(1,n)$.
For any $i,j$ satisfying $1\leq i\leq j\leq n$ (including $(i,j)=(1,n)$) we define
\[
\widetilde c_{ij}=-\sum_{r=i}^{j-1}c_{ir}c_{r+1,j}\in C^2.
\]
One says that $(c_{ij})$ is a  \textsl{defining system of size $n$} in $C^\bullet$ if
$\partial c_{ij}=\widetilde c_{ij}$ for every $1\leq i\leq j\leq n$ with $(i,j)\neq(1,n)$.
We also say that the defining system $(c_{ij})$ is \textsl{on $c_{11}\nek c_{nn}$}.
Note that then $c_{ii}$ is a $1$-cocycle, $i=1,2\nek n$.
Further, $\widetilde c_{1n}$ is a $2$-cocycle (\cite{Kraines66}*{p.\ 432}, \cite{Fenn83}*{p.\ 233}).
Its cohomology class depends only on the cohomology classes $[c_{11}]\nek[c_{nn}]$ \cite{Kraines66}*{Th.\ 3}.
Given $c_1\nek c_n\in Z^1$, the \textsl{$n$-fold Massey product} of $\langle [c_1]\nek [c_n]\rangle$ is the subset of $H^2$ consisting of all cohomology classes $[\widetilde{c_{1n}}]$
obtained from defining systems $(c_{ij})$ of size $n$ on $c_1\nek c_n$ in $C^\bullet$.
This construction is functorial in the natural sense.
When this subset is nonempty one says that $\langle [c_1]\nek [c_n]\rangle$ is \textsl{defined}.
Note that $\langle [c_1]\nek [c_n]\rangle$ contains $0$ if and only if there is an array $(c_{ij})$, $1\leq i\leq j\leq n$, in $C^1$ such that $\partial c_{ij}=\widetilde c_{ij}$ for every $1\leq i\leq j\leq n$
(including $(i,j)=(1,n)$).
In this case one says that the Massey product $\langle [c_1]\nek [c_n]\rangle$ is \textsl{trivial}.

When $n=2$, $\langle [c_1],[c_2]\rangle$ is always defined and consists only of $-[c_1][c_2]$.

Next we record some well-known facts on the case $n=3$.

\begin{prop}
\label{structure of triple Massey products}
Let $c_1,c_2,c_3\in Z^1$.
\begin{enumerate}
\item[(a)]
$\langle [c_1],[c_2],[c_3]\rangle$ is defined if and only if $[c_1][c_2]=[c_2][c_3]=0$;
\item[(b)]
If $(c_{ij})$ is a defining system on $[c_1],[c_2],[c_3]$, then
$\langle [c_1],[c_2],[c_3]\rangle=[\widetilde {c_{13}}]+[c_1]H^1+[c_3]H^1$.
\end{enumerate}
\end{prop}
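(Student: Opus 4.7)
The plan is to unwind the definition of a size-$3$ defining system and then compute directly; both parts reduce to the formula $\widetilde c_{13}=-(c_{11}c_{23}+c_{12}c_{33})$ and the Leibnitz rule.

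For part (a), a defining system of size $3$ on $c_1,c_2,c_3$ amounts, beyond the diagonal entries $c_{ii}=c_i$, to a choice of $c_{12},c_{23}\in C^1$ satisfying $\partial c_{12}=\widetilde c_{12}=-c_1c_2$ and $\partial c_{23}=\widetilde c_{23}=-c_2c_3$. Such $c_{12}$ and $c_{23}$ exist in $C^1$ exactly when the $2$-cocycles $c_1c_2$ and $c_2c_3$ are coboundaries, i.e.\ when $[c_1][c_2]=0=[c_2][c_3]$ in $H^2$. Both directions of (a) follow at once.

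For the inclusion ``$\subseteq$'' in part (b), I would compare the fixed defining system $(c_{ij})$ with an arbitrary defining system $(c'_{ij})$ on $c_1,c_2,c_3$. Since $\partial c'_{12}=\partial c_{12}$ and $\partial c'_{23}=\partial c_{23}$, the differences $d_{12}:=c'_{12}-c_{12}$ and $d_{23}:=c'_{23}-c_{23}$ lie in $Z^1$. Expanding and cancelling yields the key identity
\[
\widetilde c'_{13}-\widetilde c_{13}=-c_1d_{23}-d_{12}c_3,
\]
so $[\widetilde{c'_{13}}]-[\widetilde{c_{13}}]=-[c_1][d_{23}]-[d_{12}][c_3]\in[c_1]H^1+H^1[c_3]$. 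Graded commutativity in degrees $(1,1)$ gives $[d_{12}][c_3]=-[c_3][d_{12}]$, so this class lies in $[c_1]H^1+[c_3]H^1$. For the reverse inclusion, given any $[\alp],[\beta]\in H^1$ with representatives $\alp,\beta\in Z^1$, I would set $c'_{ii}=c_{ii}$, $c'_{12}=c_{12}+\beta$, $c'_{23}=c_{23}+\alp$. Because $\partial\alp=\partial\beta=0$, the boundary conditions still hold, so $(c'_{ij})$ is a defining system, and
\[
\widetilde{c'_{13}}=\widetilde c_{13}-c_1\alp-\beta c_3.
\]
Letting $[\alp],[\beta]$ range over $H^1$ realizes every element of $[\widetilde c_{13}]+[c_1]H^1+[c_3]H^1$ as some $[\widetilde{c'_{13}}]$, which completes (b).

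I do not anticipate a real obstacle: everything reduces to the explicit formula for $\widetilde c_{13}$ together with the Leibnitz rule. The only point requiring care is sign bookkeeping and the harmless identification $H^1[c_3]=[c_3]H^1$ via graded commutativity, which is automatic in degree $1\times1$.
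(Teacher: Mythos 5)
Your proposal is correct and follows essentially the same route as the paper's own proof: part (a) by observing that a defining system is exactly a choice of primitives for $-c_1c_2$ and $-c_2c_3$, and part (b) by comparing two defining systems via the cocycles $d_{12},d_{23}$ and computing $\widetilde{c'_{13}}-\widetilde{c_{13}}$ explicitly (the paper merely absorbs one sign by setting $d_{23}=c_{23}-c'_{23}$, which is why its formula reads $[\widetilde{c_{13}}]+[c_1][d_{23}]+[c_3][d_{12}]$). Your sign bookkeeping and the identification $H^1[c_3]=[c_3]H^1$ by graded commutativity are both correct.
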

\begin{proof}
(a) \quad
Having a defining system on $c_1,c_2,c_3$ means that there exist $c_{12},c_{23}\in C^1$ with $\partial c_{12}=-c_1c_2$ and $\partial c_{23}=-c_2c_3$,
i.e., $c_1c_2,c_2c_3\in B^2$.

\medskip

(b) \quad
Suppose that $(c'_{ij})$ is another defining system on $c_1,c_2,c_3$.
For $d_{12}=c'_{12}-c_{12}$ and $d_{23}=c_{23}-c'_{23}$ we have $\partial d_{12}=\widetilde{c'_{12}}-\widetilde{c_{12}}=-c_1c_2+c_1c_2=0$.
Thus $d_{12}\in Z^1$, and similarly $d_{23}\in Z^1$.
By a direct calculation $[\widetilde{c'_{13}}]=[\widetilde{c_{13}}]+[c_1][d_{23}]+
[c_3][d_{12}]$.

Conversely, for every $d_{12},d_{23}\in Z^1$, the system $(c'_{ij})$ is also a defining system on $c_1,c_2,c_3$, where we take
$c'_{ii}=c_i$, $c'_{12}=c_{12}-d_{12}$ and $c'_{23}=c_{23}+d_{23}$.
One has $[\widetilde{c'_{13}}]=[\widetilde{c_{13}}]+[c_1][d_{23}]+[c_3][d_{12}]$.
\end{proof}

Now for the fixed prime number $p$, let $G$ be a profinite group acting trivially on $\dbZ/p$.
Let $C^\bullet=\bigoplus_{s=0}^\infty C^s(G,\dbZ/p)$ be the $\dbZ/p$-DGA of continuous cochains from $G$ to $\dbZ/p$, with the cup product.
Thus in our previous notation, $H^i=H^i(G)$.

\begin{prop}
\label{cohomological lemma}
Let $\chi_1,\chi_2,\chi_3\in H^1(G)$.
Suppose that the triple Massey product $\langle\chi_1,\chi_2,\chi_3\rangle$ is defined, and that the cup product--restriction property holds for $\chi_1,\chi_3$.
Then $0\in\langle\chi_1,\chi_2,\chi_3\rangle$.
\end{prop}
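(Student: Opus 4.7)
The plan is to exploit functoriality of Massey products under restriction to $K=\Ker(\chi_1)\cap\Ker(\chi_3)$, combined with the fact that when the outer entries of a triple Massey product vanish the product collapses to $\{0\}$. Concretely, since $\langle\chi_1,\chi_2,\chi_3\rangle$ is defined, Proposition \ref{structure of triple Massey products}(a) gives $\chi_1\cup\chi_2=\chi_2\cup\chi_3=0$, and we may fix $1$-cocycles $c_1,c_2,c_3$ representing $\chi_1,\chi_2,\chi_3$ together with a defining system $(c_{ij})$ of size $3$ on them in the DGA $C^\bullet(G,\dbZ/p)$. Set $\alp=[\widetilde{c_{13}}]\in\langle\chi_1,\chi_2,\chi_3\rangle$.

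The first main step is to show that $\res_K(\alp)=0$ in $H^2(K)$. Since restriction $C^\bullet(G,\dbZ/p)\to C^\bullet(K,\dbZ/p)$ is a homomorphism of DGAs, the restricted cochains $(c_{ij}|_K)$ form a defining system of size $3$ on $c_1|_K,c_2|_K,c_3|_K$; by definition of $K$ the outer cohomology classes $[c_1|_K]$ and $[c_3|_K]$ vanish. Hence $\res_K(\alp)=[\widetilde{c_{13}}|_K]$ lies in $\langle 0,\res_K\chi_2,0\rangle$. By Proposition \ref{structure of triple Massey products}(b) this set equals $\res_K(\alp)+0\cdot H^1(K)+0\cdot H^1(K)=\{\res_K(\alp)\}$, so it is a single element; on the other hand, it clearly also contains $0$, via the trivial defining system that picks the cocycle representatives $c_1|_K=\partial b_1$, $c_3|_K=\partial b_3$, together with $c_{12}=-b_1 c_2$, $c_{23}=-c_2 b_3$, for which $\widetilde{c_{13}}$ is a coboundary. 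Therefore $\res_K(\alp)=0$.

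The second step is to apply the cup product--restriction property for $\chi_1,\chi_3$: since $\alp\in\Ker(\res_K)$, exactness of the sequence \eqref{cup product-restriction} (with $r=2$) yields $\psi_1,\psi_3\in H^1(G)$ with $\alp=\chi_1\cup\psi_1+\chi_3\cup\psi_3$. Invoking Proposition \ref{structure of triple Massey products}(b) once more,
\[
\langle\chi_1,\chi_2,\chi_3\rangle=\alp+\chi_1\cup H^1(G)+\chi_3\cup H^1(G)
\]
contains $\alp-\chi_1\cup\psi_1-\chi_3\cup\psi_3=0$, as required.

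The only delicate point is the vanishing of $\res_K(\alp)$, and specifically the verification that $\langle 0,\mu,0\rangle=\{0\}$ for any $\mu\in H^1(K)$; once that is established the remainder is a direct application of the hypothesis and of Proposition \ref{structure of triple Massey products}(b).
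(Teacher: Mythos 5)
Your argument is correct and follows essentially the same route as the paper: restrict to $K=\Ker(\chi_1)\cap\Ker(\chi_3)$, use functoriality and the vanishing of the outer classes to conclude $\res_K(\alp)\in\langle 0,\res_K(\chi_2),0\rangle=\{0\}$, then apply exactness of (\ref{cup product-restriction}) together with Proposition \ref{structure of triple Massey products}(b) to shift $\alp$ to $0$ inside the Massey product. The only difference is that you spell out why $\langle 0,\mu,0\rangle=\{0\}$, which the paper asserts without comment; your explicit witnesses $c_{12}=-b_1c_2$, $c_{23}=-c_2b_3$ have a sign slip coming from the Leibnitz rule, but since the Massey product depends only on the cohomology classes of the diagonal entries one may simply take the zero cochains as representatives of the outer classes, making that verification immediate.
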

\begin{proof}
Take $\alp\in\langle\chi_1,\chi_2,\chi_3\rangle$.
Let $K=\Ker(\chi_1)\cap\Ker(\chi_3)$.
The functoriality of the Massey product implies that
\[
\res_K(\alp)\in \langle\res_K(\chi_1),\res_K(\chi_2),\res_K(\chi_3)\rangle=\langle0,\res_K(\chi_2),0\rangle=\{0\}.
\]
By the exactness of (\ref{cup product-restriction}), $\alp\in \Img(\Lam_{(\chi_1,\chi_3)})$, that is,
 $\alp=\chi_1\cup\beta_1+\chi_3\cup\beta_3$ for some $\beta_1,\beta_3\in H^1(G)$.
Now Proposition \ref{structure of triple Massey products} implies that
\[
0=\alp-\chi_1\cup\beta_1-\chi_3\cup\beta_3\in \langle\chi_1,\chi_2,\chi_3\rangle.
\qedhere
\]
\end{proof}

Dwyer \cite{Dwyer75} relates $n$-fold Massey products in $C^\bullet(G,\dbZ/p)$
to unipotent upper-triangular $n+1$-dimensional representations of $G$ as follows
(\cite{Dwyer75} works in a discrete context and with more a general coefficients ring;
see \cite{Efrat14}*{\S8} for the profinite context, and \cite{Wickelgren12} for a generalization to the case of non-trivial actions).
For $n\geq2$ let $\dbU_{n+1}(\dbF_p)$ be as before the group of all unipotent upper-triangular $(n+1)\times(n+1)$-matrices over $\dbF_p$.
Its center consists of all matrices which are $0$ on all off-diagonal entries, except possibly for entry $(1,n+1)$.
Let $\bar\dbU_{n+1}(\dbF_p)$ be the quotient of $\dbU_{n+1}(\dbF_p)$ by this center.
Its elements may be viewed as  unipotent upper-triangular $(n+1)\times(n+1)$-matrices with the $(1,n+1)$-entry deleted.
We notice that $\dbU_{n+1}(\dbF_p)$ is a $p$-group, and its Frattini subgroup is the kernel of the epimorphism
$\dbU_{n+1}(\dbF_p)\to(\dbZ/p)^n$, $(c_{ij})\mapsto(c_{12},c_{23}\nek c_{n,n+1})$, and similarly for $\bar \dbU_{n+1}(\dbF_p)$.

Given an array $(c_{ij})$, $1\leq i\leq j\leq n$, in $C^1(G,\dbZ/p)$, we define a continuous map $\gam\colon G\to\dbU_{n+1}(\dbF_p)$
by $\gam(\sig)_{ij}=(-1)^{j-i}c_{i,j-1}(\sig)$ for $\sig\in G$ and for $1\leq i<j\leq n+1$.
Then $\widetilde{c_{ij}}=\partial c_{ij}$ for every $i<j$ if and only if $\gam\colon G\to\dbU_{n+1}(\dbF_p)$ is a homomorphism.
Similarly, $\widetilde{c_{ij}}=\partial c_{ij}$ for every $i<j$ with $(i,j)\neq(1,n)$ if and only if the induced map $\bar\gam\colon G\to\bar\dbU_{n+1}(\dbF_p)$ is a homomorphism.
We write $\gam_{ij},\bar\gam_{ij}$ for the projections of $\gam,\bar\gam$, respectively, on the $(i,j)$-coordinate.

\begin{prop}
\label{Massey products and homorphisms}
Let $\chi_1\nek\chi_n\in H^1(G)$ be $\dbF_p$-linearly independent.
\begin{enumerate}
\item[(b)]
$\langle\chi_1\nek\chi_n\rangle$ is defined if and only if
there exists a continuous homomorphism $\bar\gam\colon G\to\bar\dbU_{n+1}(\dbF_p)$ such that $\bar\gam_{i,i+1}=\chi_i$, $i=1,2\nek n$,
\item[(B)]
$\langle\chi_1\nek\chi_n\rangle$ is trivial if and only if
there exists a continuous homomorphism $\gam\colon G\to\dbU_{n+1}(\dbF_p)$ such that $\gam_{i,i+1}=\chi_i$, $i=1,2\nek n$.
\end{enumerate}
Moreover, such homomorphisms $\gam,\bar\gam$ are necessarily surjective.
\end{prop}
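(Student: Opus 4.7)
The plan is to extract both equivalences immediately from the cochain-to-matrix dictionary recalled just before the statement, and then to deduce surjectivity from the Frattini-subgroup description of $\dbU_{n+1}(\dbF_p)$.

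First I would exploit the bijection $\gam(\sig)_{ij}=(-1)^{j-i}c_{i,j-1}(\sig)$ between pointed continuous maps $\gam\colon G\to\dbU_{n+1}(\dbF_p)$ and arrays $(c_{ij})_{1\leq i\leq j\leq n}$ in $C^1(G,\dbZ/p)$. As already noted, the multiplicativity of $\gam$ translates into the equations $\partial c_{ij}=\widetilde c_{ij}$ for every $i\leq j$, and dropping the $(1,n)$-equation corresponds to passing from $\gam$ to $\bar\gam\colon G\to\bar\dbU_{n+1}(\dbF_p)$. The superdiagonal entries of $\gam$ recover the cocycles $c_{ii}$ up to the sign $-1$, which is absorbed by taking $c_{ii}=-\chi_i$ as cocycle representative, a legitimate choice in a defining system. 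Thus a defining system of size $n$ on $\chi_1,\ldots,\chi_n$ is precisely the data of a continuous homomorphism $\bar\gam\colon G\to\bar\dbU_{n+1}(\dbF_p)$ with $\bar\gam_{i,i+1}=\chi_i$ in $\Hom(G,\dbZ/p)=H^1(G)$; this gives (b). Analogously, triviality of the Massey product corresponds to extending the array across $(i,j)=(1,n)$ as well, equivalently to lifting $\bar\gam$ to a homomorphism $\gam\colon G\to\dbU_{n+1}(\dbF_p)$, which gives (B).

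For the surjectivity clause I would compose $\gam$ (or $\bar\gam$) with the superdiagonal projection $\pi\colon\dbU_{n+1}(\dbF_p)\to(\dbZ/p)^n$. The composition sends $\sig$ to $(\pm\chi_1(\sig),\ldots,\pm\chi_n(\sig))$, which is surjective because the $\chi_i$ are $\dbF_p$-linearly independent in $\Hom(G,\dbZ/p)$. Since $\ker\pi$ is the Frattini subgroup of the finite $p$-group $\dbU_{n+1}(\dbF_p)$ (and analogously for $\bar\dbU_{n+1}(\dbF_p)$), the image $\gam(G)$ meets every Frattini coset, and the non-generator property of the Frattini subgroup of a finite $p$-group then forces $\gam(G)=\dbU_{n+1}(\dbF_p)$; the same argument handles $\bar\gam$.

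No substantial obstacle is anticipated: once the cochain-to-matrix dictionary and the sign convention for interpreting $\bar\gam_{i,i+1}=\chi_i$ are fixed, the proof reduces to a direct translation of definitions followed by a standard Frattini argument.
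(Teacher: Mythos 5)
Your proposal is correct and follows essentially the same route as the paper: both parts are read off from the dictionary between arrays $(c_{ij})$ in $C^1(G,\dbZ/p)$ and continuous maps to $\dbU_{n+1}(\dbF_p)$ (resp.\ $\bar\dbU_{n+1}(\dbF_p)$), and surjectivity comes from the identification of the kernel of the superdiagonal projection with the Frattini subgroup. You are in fact slightly more careful than the paper in flagging the sign $(-1)^{j-i}$ in the dictionary and explaining how it is absorbed.
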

\begin{proof}
(a) and (b) follow from the above discussion.
The surjectivity of $\gam$ and $\bar\gam$ follows by a Frattini argument.
\end{proof}

This and Proposition \ref{structure of triple Massey products}(a) imply the following facts, mentioned in the Introduction, which are also implicit in \cite{MinacTan14}*{Cor.\ 3.2}:

\begin{cor}
\label{Massey products and Un}
Let $\chi_1,\chi_2,\chi_3\in H^1(G)$ be $\dbF_p$-linearly independent.
Then:
\begin{enumerate}
\item[(a)]
$\langle\chi_1,\chi_2,\chi_3\rangle$ is defined if and only if there exist continuous epimorphisms $\gam',\gam''\colon G\to\dbU_3(\dbF_p)$
such that
$\gam'_{12}=\chi_1$, $\gam'_{23}=\chi_2$, $\gam''_{12}=\chi_2$, $\gam''_{23}=\chi_3$.
\item[(b)]
 $\langle\chi_1,\chi_2,\chi_3\rangle$ is trivial if and only if there exists a continuous epimorphism $\gam\colon G\to\dbU_4(\dbF_p)$ such that
$\gam_{12}=\chi_1$, $\gam_{23}=\chi_2$ and $\gam_{34}=\chi_3$.
\end{enumerate}
\end{cor}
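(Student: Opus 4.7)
The plan is to derive both parts of the corollary directly from Proposition \ref{Massey products and homorphisms} and Proposition \ref{structure of triple Massey products}, since no new ideas are needed beyond those already recorded; the corollary is really just a translation of the two preceding propositions into the small cases $n=2,3$.

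For part (b), I would simply invoke Proposition \ref{Massey products and homorphisms}(B) with $n=3$, noting that $\dbU_4(\dbF_p)=\dbU_{n+1}(\dbF_p)$ in that case. Both directions of the equivalence are then immediate, and the surjectivity of the continuous homomorphism $\gam$ is built into the conclusion of Proposition \ref{Massey products and homorphisms} via the Frattini argument, which applies because $\chi_1,\chi_2,\chi_3$ are $\dbF_p$-linearly independent.

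For part (a), I would proceed in two short steps. First, apply Proposition \ref{structure of triple Massey products}(a) to replace the hypothesis that $\langle\chi_1,\chi_2,\chi_3\rangle$ is defined by the pair of cup product vanishings $\chi_1\cup\chi_2=0$ and $\chi_2\cup\chi_3=0$. Second, recall that for $n=2$ the Massey product is always defined and equals the singleton $\{-\chi_i\cup\chi_{i+1}\}$, so each vanishing is equivalent to triviality of the corresponding $2$-fold Massey product. Applying Proposition \ref{Massey products and homorphisms}(B) with $n=2$ to each of the pairs $(\chi_1,\chi_2)$ and $(\chi_2,\chi_3)$ then produces the two continuous epimorphisms $\gam',\gam''\colon G\to\dbU_3(\dbF_p)$ with the prescribed off-diagonal entries; the converse runs back through exactly the same chain of equivalences.

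I do not foresee any genuine obstacle: the substantive work has already been done in the two cited propositions. The only point that merits any care is verifying that the surjectivity clause of Proposition \ref{Massey products and homorphisms} applies in part (a), which amounts to noting that $\dbF_p$-linear independence of the triple $(\chi_1,\chi_2,\chi_3)$ forces $\dbF_p$-linear independence of each of the pairs $(\chi_1,\chi_2)$ and $(\chi_2,\chi_3)$, so that the Frattini argument goes through for both $\gam'$ and $\gam''$.
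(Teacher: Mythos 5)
Your proposal is correct and follows exactly the route the paper intends: the paper derives the corollary in one line from Proposition \ref{Massey products and homorphisms} together with Proposition \ref{structure of triple Massey products}(a), and your write-up simply makes explicit the reduction of part (a) to the $n=2$ case (via $\langle\chi_1,\chi_2\rangle=\{-\chi_1\cup\chi_2\}$) and the direct application of Proposition \ref{Massey products and homorphisms}(B) with $n=3$ for part (b). Your remark that linear independence of the triple gives linear independence of each pair, so the Frattini surjectivity argument applies to $\gam'$ and $\gam''$, is the right point to check and is handled correctly.
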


\begin{bibdiv}
\begin{biblist}

\bib{Albert39}{book}{
   author={Albert, A. Adrian},
   title={Structure of Algebras},
   series={American Mathematical Society Colloquium   Publications, Vol. XXIV},
   publisher={American Mathematical Society, Providence, R.I.},
   date={1939},
}

\bib{Arason75}{article}{
   author={Arason, J{\'o}n Kr.},
   title={Cohomologische Invarianten quadratischer Formen},
   journal={J. Algebra},
   volume={36},
   date={1975},
   pages={448--491},
 }

\bib{ArtinTate}{book}{
   author={Artin, Emil},
   author={Tate, John},
   title={Class Field Theory},
   note={Reprinted with corrections from the 1967 original},
   publisher={AMS Chelsea Publishing, Providence, RI},
   date={2009},
   pages={viii+194},
}

\bib{Draxl83}{book}{
author={Draxl, P.K.},
title={Skew Fields},
series={London Math. Soc.\ Lect.\ Notes Series},
volume={81},
publisher={Cambridge University Press},
place={Cambridge},
date={1983},
}

\bib{Dwyer75}{article}{
   author={Dwyer, William G.},
   title={Homology, Massey products and maps between groups},
   journal={J. Pure Appl. Algebra},
   volume={6},
   date={1975},
   pages={177\ndash190},
}

\bib{Efrat14}{article}{
   author={Efrat, Ido},
   title={The Zassenhaus filtration, Massey products, and representations of profinite groups},
   journal={Adv.\ Math.},
   volume={263},
   date={2014},
   pages={389\ndash411},
}

\bib{EfratMinac11}{article}{
author={Efrat, Ido},
author={Min\' a\v c, J\'an},
title={On the descending central sequence of absolute Galois groups},
journal={Amer.\ J.\ Math.},
volume={133},
date={2011},
pages={1503\ndash1532},
}

\bib{ElmanLamTignolWadsworth83}{article}{
author={Elman, Richard},
author={Lam, T. Y.},
author={Tignol, Jean-Pierre},
author={Wadsworth, A.R.},
title={Witt rings and Brauer groups under multiquadratic extensions, I},
journal={Amer. J. Math.},
volume={105},
date={1983},
pages={1119\ndash1170},
}

\bib{Fenn83}{book}{
author={Fenn, Roger A.},
title={Techniques of Geometric Topology},
Series={London Math.\ Soc.\ Lect. Notes Series},
volume={57},
publisher={Cambridge Univ. Press},
date={1983},
place={Cambridge}
}

\bib{HopkinsWickelgren15}{article}{
   author={Hopkins, Michael J.},
   author={Wickelgren, Kirsten G.},
   title={Splitting varieties for triple Massey products},
   journal={J. Pure Appl. Algebra},
   volume={219},
   date={2015},
   pages={1304--1319},
}

\bib{Kraines66}{article}{
author={Kraines, David},
title={Massey higher products},
journal={Trans.\ Amer.\ Math.\ Soc.},
volume={124},
date={1966},
pages={431\ndash449},
}

\bib{LemireMinacSwallow07}{article}{
author={Lemire, Nicole},
author={Min\'a\v c, J\'an},
author={Swallow, John},
title={Galois module structure of Galois Cohomology and partial Euler-Poincare Characteristics},
journal={J.\ reine angew.\ Math.},
volume={613},
date={2007},
pages={147\ndash173},
}

\bib{Matzri14}{article}{
author={Matzri, Eliyahu},
title={$\dbZ_3\times\dbZ_3$-crossed products},
journal={J.\ Algebra},
volume={418},
date={2014},
pages={1\ndash7},
}

\bib{May69}{article}{
author={May, J. Peter},
title={Matric Massey products},
journal={J. Algebra},
volume={12},
pages={533\ndash 568},
date={1969}
}

\bib{McKinnie11}{article}{
   author={McKinnie, Kelly},
   title={Degeneracy and decomposability in abelian crossed products},
   journal={J. Algebra},
   volume={328},
   date={2011},
   pages={443\ndash460},
}

\bib{MinacTan13}{article}{
author={Min\'a\v c, J\'an},
author={T\^an, Nguyen Duy},
title={Triple Massey products and Galois theory},
journal={J.\ Eur.\ Math.\ Soc.},
status={to appear},
eprint={arXiv:1307.6624},
date={2013},
}

\bib{MinacTan14}{article}{
author={Min\'a\v c, J\'an},
author={T\^an, Nguyen Duy},
title={Triple Massey products over global fields},
eprint={arXiv:1407.4586},
date={2014},
}

\bib{MinacTan15}{article}{
   author={Min\'a\v c, J\'an},
   author={T\^an, Nguyn Duy},
   title={The kernel unipotent conjecture and the vanishing of Massey    products for odd rigid fields},
   journal={Adv. Math.},
   volume={273},
   date={2015},
   pages={242--270},
   status={(with an appendix by I.\ Efrat, J.\ Min\'a\v c, and N.D. T\^an)},
}

\bib{Rowen82}{article}{
   author={Rowen, Louis Halle},
   title={Cyclic division algebras},
   journal={Israel J. Math.},
   volume={41},
   date={1982},
   pages={213--234},
   note={Correction: Israel J.\ Math.\ {\bf43} (1982),  277\ndash 280},
}

\bib{Rowen84}{article}{
   author={Rowen, Louis H.},
   title={Division algebras of exponent $2$ and characteristic $2$},
   journal={J. Algebra},
   volume={90},
   date={1984},
   pages={71--83},
}

\bib{Rowen}{book}{
   author={Rowen, Louis Halle},
   title={Graduate Algebra: Noncommutative View},
   series={Graduate Studies in Mathematics},
   volume={91},
   publisher={Amer.\ Math.\ Soc., Providence, RI},
   date={2008},
   pages={xxvi+648},
}

\bib{Saltman79}{article}{
   author={Saltman, David J.},
   title={Indecomposable division algebras},
   journal={Comm. Algebra},
   volume={7},
   date={1979},
   pages={791--817},
}

\bib{SerreLocalFields}{book}{
   author={Serre, Jean-Pierre},
   title={Local Fields},
   series={Grad.\ Texts Math.},
   volume={67},
   publisher={Springer-Verlag, New York-Berlin},
   date={1979},
   pages={viii+241},
}

\bib{Tignol79}{article}{
   author={Tignol, J.-P.},
   title={Central simple algebras with involution},
   conference={
      title={Ring theory (Proc. Antwerp Conf.},
      address={NATO Adv. Study Inst.), Univ. Antwerp, Antwerp},
      date={1978},
   },
   book={
      series={Lecture Notes in Pure and Appl. Math.},
      volume={51},
      publisher={Dekker, New York},
   },
   date={1979},
   pages={279--285},
}

\bib{Tignol81a}{article}{
author={Tignol, Jean-Pierre},
title={Corps \`a involution neutralis\'es par une extension ab\'elienne \'el\'ementaire,},
conference={
    title={Groupe de Brauer (Les Plans-sur-Bex 1980, M.\ Kervaire and M.\ Ojanguren, Eds.)}, },
book={series={Lecture Notes in Math.},
    volume={844},
    publisher={Springer, Berlin},
    }
date={1981},
pages={1\ndash34},
}

\bib{Tignol81b}{article}{
author={Tignol, Jean-Pierre},
 title={Produits crois\'es ab\'eliens},
 journal={J. Algebra},
 volume={70},
 date={1981},
  pages={420--436},
}

\bib{Tignol87}{article}{
author={Tignol, Jean-Pierre},
title={Alg\`ebres ind\'ecomposables d'exposant premier},
journal={Adv.\ Math.},
volume={65},
pages={205\ndash228},
date={1987},
}

\bib{Voevodsky03}{article}{
author={Voevodsky, Vladimir},
title={Motivic cohomology with $\dbZ/2$-coefficients},
journal={Publ. Math. Inst. Hautes \'Etudes Sci.},
volume={98},
pages={59\ndash104},
date={2003},
}

\bib{Wickelgren12}{article}{
   author={Wickelgren, Kirsten},
   title={$n$-nilpotent obstructions to $\pi_1$ sections of $\mathbb{P}^1-\{0,1,\infty\}$ and Massey products},
   conference={
      title={Galois-Teichm\"uller theory and arithmetic geometry},
   },
   book={
      series={Adv. Stud. Pure Math.},
      volume={63},
      publisher={Math. Soc. Japan, Tokyo},
   },
   date={2012},
   pages={579--600},
  }

\end{biblist}
\end{bibdiv}

\end{document}